\chardef\bslash=`\\ % p. 424, TeXbook
\def\verbatim{\interlinepenalty\@M \@verbatim
  \leftskip\@totalleftmargin\advance\leftskip2pc
  \frenchspacing\@vobeyspaces \@xverbatim}
\newtheorem{thm}{Theorem}[section]
\newtheorem{cor}[thm]{Corollary}
\newtheorem{lem}[thm]{Lemma}
\newtheorem{pro}[thm]{Proposition}
\newtheorem{ex}[thm]{Example}
\newtheorem{que}[thm]{Question}
\begin{document}

%%%%%%% Begin Topmatter %%%%%%%%%%

\title
{Three Questions on Special Homeomorphisms on Subgroups of $\mathbb R$ and $\mathbb R^\infty$}
\author{Raushan  Buzyakova}
\email{Raushan\_Buzyakova@yahoo.com}
\author{James West}
\email{west@math.cornell.edu}
\address{Department of Mathematics, Cornell University, Ithaca, NY, 14853}

\keywords{ topological group, involution, fixed point, shift, conjugate maps}
\subjclass{ 54H11, 58B99, 06F15}

%%%%%%% End topmatter %%%%%%%%%

\begin{abstract}{
We provide justifications for two questions on special maps on subgroups of $\mathbb R$. We will show that the questions can be treated from different points of view. We also discuss two versions of Anderson's Involution Conjecture.
}
\end{abstract}

\maketitle
\markboth{R. Buzyakova and J. West}{Three Questions on Special Homeomorphisms on Subgroups of $\mathbb R$ and $\mathbb R^\infty$}
{ }

\section{Questions}\label{S:questions}

\par\bigskip
In this note, the authors would like to share some of their observations that led to three questions that may be of interest to researchers from different areas of mathematics.  In what follows  by $\mathbb R$ we denote the topological additive group  of real numbers endowed with the Eucledian topology. Recall that a topological group $G$  is a topological space $X_G$ with a group operation $\cdot $  such that both $\cdot$ and the inversion $x\mapsto x^{-1}$ are continuous with respect to the topology of $X_G$. To avoid too many variables, we will refer to a topological group under consideration and its underlying topological space by the same letter. For example, if we a topological group $G$ is in our scope and we introduce another group operation $\oplus$ on the underlying space of $G$, we will  write $\langle G, \oplus\rangle$ instead of $\langle X_G, \oplus\rangle$.  Unless specified otherwise, the group operation on a group under consideration will be denoted by $+$. In \cite{B}, it is established that given a strictly  monotonic bijection $f$  on a $\sigma$-compact  subgroup $G$ of $\mathbb R$ it is possible to introduce another group structure on $G$ compatible with the topology of $G$ so that the new group is topologically isomorphic to $G$ and has $f$ as a shift. In fact, the proof in \cite{B} holds not only for $\sigma$-compact subgroups but also for subgroups with a certain strong homogeneity property to be discussed later. The mentioned result prompts the following question.

\par\bigskip\noindent
\begin{que}\label{que:shift}
Let $G$ be a subgroup of $\mathbb R$ and let $f:G\to G$ be a strictly monotonic homeomorphism. Is it true that there exists a binary operation  $\oplus$ on $G$ such that $G'=\langle G, \oplus\rangle$ is topologically isomorphic to $G$ and $f$ is a shift in $G'$?
\end{que}  

\par\bigskip
 It is not hard to see that the existence of a topologically conscious algebraic restructuring of $G$ that makes $f$ a shift is equivalent to $f$ being topologically conjugate (or topologically equivalent ) to a shift in $G$. Recall that homeomorphisms $h$ and $f$ on a space $X$  are {\it topologically conjugate, or topologically equivalent,} if there exists a homeomorphism $t$ on $X$ such that $tf=ht$. We, thus, can reformulate Question \ref{que:shift} as follows:

\par\bigskip\noindent
{\bf Re-formulation of Question \ref{que:shift}}
{\it Let $G$ be a subgroup of $\mathbb R$ and let $f:G\to G$ be a strictly monotonic homeomorphism. Is it true that $f$ is topologically conjugate to some shift in $G$?
}

\par\bigskip
 Even though a proof of equivalence of  Question  \ref{que:shift} and its version in the language of conjugate maps  is straightforward, we will  provide it in Section \ref{section:conjugate} for completeness.

\par\bigskip
As  shown in \cite{B}, the monotonicity requirement cannot be replaced by being free of periodic points. Since the example in  \cite{B} is unnecessary complicated, we use this opportunity to offer a short and transparent construction justifying the monotonicity requirement.

\par\bigskip\noindent
\begin{ex}
There exists a periodic-point free map on the integers $\mathbb Z$ that cannot be a shift in any group structure on $\mathbb Z$  isomorphic to $\mathbb Z$.
\end{ex}
\begin{proof} First note that any shift on $\mathbb Z$ has only finitely many orbits.
Let $\{Z_n:n=1,2,...\}$ be a partition of $\mathbb Z$, where each $Z_n$ is unbounded both from below and above. Define $f_n:Z_n\to Z_n$ by letting $f_n(x) =\min \{y\in Z_n: y>x\}$. Next  put $f=\cup f_n$. Clearly, $f$ is a periodic-point free homeomorphism.  Since  $f$ has infinitely many orbits $\{Z_n\}_n$, no group structure on $\mathbb Z$ isomorphic to $\mathbb Z$ has $f$ as a shift. In other words, $f$ is not topologically equivalent to any shift of $\mathbb Z$.
\end{proof}

\par\bigskip
We next shift our attention on the inversion $x\mapsto -x$ in a subgroup of $\mathbb R$. Recall that $f:X\to X$ is an involution if $f\circ f$ is the identity map on $X$. An argument similar to one in \cite{B} can be used to show that given a single fixed-point involution  $f$  on a $\sigma$-compact  subgroup $G$ of $\mathbb R$ it is possible to introduce another group structure on $G$ compatible with the topology of $G$ so that the new group is topologically isomorphic to $G$ and has $f$ as the inversion. For completeness purpose we will provide a proof  of this fact in Section \ref{section:inverse}. The argument fails to reach a desired conclusion for any subgroup of $\mathbb R$, and thus, the question is in order:

\par\bigskip\noindent
\begin{que}\label{que:inverse}
Let $G$ be a subgroup of $\mathbb R$ and let $f:G\to G$ be a single fixed-point involution. Is it true that there exists a binary operation  $\oplus_f$ on $G$ such that $G'=\langle G, \oplus_f, \rangle$ is topologically isomorphic to $G$ and $f$ is taking the additive inverse  in $G'$?
\end{que}

\par\bigskip\noindent
{\bf Re-formulation of Question \ref{que:inverse}}
{\it Let $G$ be a subgroup of $\mathbb R$ and let $f:G\to G$ be a single fixed-point involution. Is it true that $f$ is conjugate to taking the additive inverse in $G$?
}

\par\bigskip
The equivalence of Question \ref{que:inverse} and its re-formulation will be shown in in Section \ref{section:conjugate} too.
As mentioned earlier, the results leading to the above questions hold not only for $\sigma$-compact subgroups of $\mathbb R$ but also for all zero-dimensional subgroups with the property that every two non-empty open sets are homeomorphic. As shown in \cite{vD},  this property need not be present in a dense zero-dimensional subgroup.

\par\bigskip
For our third question, let $I=[-1,1]$, $I^{\infty}=\Pi_{i\ge 1}I_i$, and $\Bbb R^{\infty}=\Pi_{1\ge 1}R_i$,
For $X=I^{\infty}\text{ or }\Bbb R^{\infty}$, denote 
elements by $\bar x=(x_1,x_2,\dots)$, and let $\sigma (\bar x)=-\bar x$.  R.D. Anderson's 
Involution Conjecture for $\Bbb R^{\infty}$ or $I^{\infty}$ is that all involutions that have 
a single fixed point are topologically conjugate with $\sigma$. 

The initial paper on involutions of $I^{\infty}$ is \cite{Wo}, which proves that the Anderson 
 Conjecture is true for 
$I^{\infty}$ if the fixed point of every such involution has a basis of invariant, contractible neighborhoods.  In \cite{WWo}, the second author and Wong extended this by showing that the Anderson Conjecture is true for 
$I^{\infty}$ if the orbit space $I^{\infty}/\alpha$ of every involution on $I^{\infty}$ with a 
single fixed point is an absolute retract. 

In Proposition \ref{pro:anderson} we will show that the $\mathbb R^\infty$-version of the conjecture implies the $I^\infty$-version of the conjecture.
Since $\mathbb R^\infty$ is a topological group, a homeomorphism $f$ on $\mathbb R^\infty$ is topologically conjugate to $\sigma$ if and only if there exists a group operation $\oplus$ on $\mathbb R^\infty$ such that $\langle \mathbb R^\infty, \oplus\rangle$ is topologically isomorphic to $\mathbb R^\infty$ and has $f$ as the inversion (Section \ref{section:conjugate}). This alternative point of view on Anderson's Conjecture for $\mathbb R^\infty$  may give some extra routes for a positive  resolution.

\par\bigskip\noindent
\begin{que}
Is it true that the Anderson Conjecture for $I^\infty$ implies the Anderson Conjecture for $\mathbb R^\infty$?
\end{que}

\begin{pro}\label{pro:anderson}  The Anderson Conjecture for $\Bbb R^{\infty}$ implies the Anderson Conjecture for $I^{\infty}$.
\end{pro}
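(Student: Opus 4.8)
The plan is to verify the hypothesis of West and Wong's criterion recalled in the introduction: assuming the Anderson Conjecture for $\mathbb R^\infty$, I will show that $I^\infty/\alpha$ is an absolute retract for \emph{every} involution $\alpha$ of $I^\infty$ with a single fixed point $p$, and then invoke \cite{WWo}.

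The first step is a stabilization. It is classical that $I^\infty\times\mathbb R^\infty\cong\mathbb R^\infty$ (for instance $\mathbb R^\infty\cong s:=(-1,1)^\infty$ and $s\times I^\infty\cong s$). On $I^\infty\times\mathbb R^\infty$ consider the involution $\beta=\alpha\times\sigma$, where on the second factor $\sigma$ is negation. Its fixed-point set is $\{p\}\times\{\bar 0\}$, a single point, so $\beta$ is a single-fixed-point involution of a space homeomorphic to $\mathbb R^\infty$. The Anderson Conjecture for $\mathbb R^\infty$ then produces a homeomorphism conjugating $\beta$ to $\sigma$; in particular $(I^\infty\times\mathbb R^\infty)/\beta\cong\mathbb R^\infty/\sigma$.

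The second step extracts $I^\infty/\alpha$ from this quotient. The coordinate projection $I^\infty\times\mathbb R^\infty\to I^\infty$ and the slice inclusion $x\mapsto(x,\bar 0)$ are both equivariant (the latter because $\sigma(\bar 0)=\bar 0$) and their composite is the identity of $I^\infty$; passing to orbit spaces, they descend to maps between $(I^\infty\times\mathbb R^\infty)/\beta$ and $I^\infty/\alpha$ whose composite is the identity of $I^\infty/\alpha$. Hence $I^\infty/\alpha$ is (homeomorphic to) a retract of $(I^\infty\times\mathbb R^\infty)/\beta\cong\mathbb R^\infty/\sigma$. Since $\mathbb R^\infty/\sigma$ is an absolute retract and a retract of an absolute retract is again an absolute retract, $I^\infty/\alpha$ is an absolute retract, and \cite{WWo} finishes the proof.

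The main obstacle I anticipate is conceptual rather than computational: the stabilization only yields that $\alpha\times\sigma$ is conjugate to $\sigma$ on $I^\infty\times\mathbb R^\infty$, which a priori says nothing about $\alpha$ alone, so some form of \emph{destabilization} is required; the reason routing through \cite{WWo} works is precisely that the relevant invariant --- being an absolute retract --- is inherited by retracts, so the destabilization becomes automatic once one has produced $I^\infty/\alpha$ as a retract of $\mathbb R^\infty/\sigma$. The remaining points to pin down are the identification $I^\infty\times\mathbb R^\infty\cong\mathbb R^\infty$ and the fact that $\mathbb R^\infty/\sigma$ is an absolute retract --- the latter because $\mathbb R^\infty/\sigma$ is contractible via the radial homotopy $[\bar x]\mapsto[t\bar x]$ and is an ANR, being the quotient of a metrizable ANR by a finite group.
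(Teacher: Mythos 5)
Your argument is the same as the paper's: stabilize $\alpha$ to the product involution $\alpha\times\sigma$ on $I^\infty\times\mathbb R^\infty\cong\mathbb R^\infty$, use the Anderson Conjecture for $\mathbb R^\infty$ to identify the orbit space of the stabilized involution with $\mathbb R^\infty/\sigma$, observe that the equivariant retraction onto the slice $I^\infty\times\{\bar 0\}$ descends to orbit spaces so that $I^\infty/\alpha$ is a retract of $\mathbb R^\infty/\sigma$ and hence an absolute retract, and then invoke \cite{WWo}. All of these steps, including your ``destabilization via an invariant inherited by retracts'' remark, match the paper's proof of Proposition \ref{pro:anderson}.

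The one place where your justification is not sound is the final parenthetical claim that $\mathbb R^\infty/\sigma$ is an ANR ``being the quotient of a metrizable ANR by a finite group.'' There is no such general theorem in the infinite-dimensional setting: whether orbit spaces of involutions (or finite group actions) on $I^\infty$ or $\mathbb R^\infty$ are A(N)Rs is precisely the delicate issue in this circle of problems, and it is exactly why the theorem of \cite{WWo} carries the absolute-retract hypothesis rather than dispensing with it. If finite quotients of metrizable ANRs were automatically ANRs, the hypothesis of \cite{WWo} would largely verify itself and the whole stabilization argument would be unnecessary in spirit. What is true, and what the paper uses, is that for the specific \emph{linear} involution $\sigma$ one can check directly that $\mathbb R^\infty/\sigma$ is an AR (for instance, identifying $\mathbb R^\infty$ with a separable Hilbert space and analyzing the orbit space of $-\mathrm{id}$ as an open cone over an infinite-dimensional projective space, which is an $\ell^2$-manifold); your radial contraction $[\bar x]\mapsto[t\bar x]$ is fine, but contractibility alone does not give the ANR property, so this local/linear argument must replace the appeal to a general quotient principle.
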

\begin{proof}
Let $\alpha$ be an involution of 
$I^{\infty}$ with a single fixed point.  Set $X=I^{\infty}\times \Bbb R^{\infty}$ and let
$\tau =\alpha \times \sigma:X\to X$ be the product (diagonal) involution. Define
$r:X\to I^{\infty}\times \{\bar 0\}$ by $r(\bar x,\bar y)=(\bar x,\bar 0)$.  Then $r$ is an 
equivariant retraction.  By \cite{M},  
%van Mill's book [M] probably has this.  My copy is at my office.
$X$ is 
homeomorphic to $\Bbb R^{\infty}$.  
It is not difficult to see using the linearity of 
$\sigma$ that $R^{\infty}/\sigma$ is an absolute retract for metric spaces. 
By the Anderson Conjecture for $\Bbb R^{\infty}$,
$\tau$ is topologically conjugate with $\sigma$, so $X/\tau$ is homeomorphic to 
$\Bbb R^{\infty}/\sigma$ and is therefore an absolute retract for metric spaces.  
Therefore the retraction $\bar r:X/\tau\to I^{\infty}/\alpha$ induced by $r$ shows that 
$I^{\infty}/\alpha$ is an absolute retract.  By \cite{WWo}, $\alpha$ is topologically conjugate to 
$\sigma$ and the Anderson Conjecture is true for $I^{\infty}$.
\end{proof}

\par\bigskip
In the next two sections we will provide the promised arguments that back up our claims.
We use standard notations and terminology. For topological basic facts and terminology one can consult \cite{Eng}. Since we do not use any intricate algebraic facts, any abstract algebra textbook is a sufficient reference.

\par\bigskip
\par\bigskip
\section{Maps Conjugate to Shifts or Inversions}\label{section:conjugate}

\par\bigskip
The goal of this section is to show that a homeomorphism $f$ on a group $G$ is topologically equivalent to the inversion of $G$ (or a shift)  if and only if there exists a group operation $\oplus$ on $G$ such that $G'=\langle G, \oplus\rangle$ is topologically isomorphic to $G$ and $f$ is the inversion (respectively,  a shift) in $G'$.  We will first achieve our goal for the case of inversions. All statements of this section are of  folklore nature but we present them with proofs for completeness.

In the arguments  of this section we will have to juggle several group structures on the same topological space. To keep track of where we invert, for a given group $G$, by $m_G$ we denote the operation of taking the inverse.
\par\bigskip\noindent
\begin{lem}\label{lem:twogroups}
Let $G_1=\langle G, \oplus_1, \rangle$ and $G_2=\langle G, \oplus_2, \rangle$ be topologically isomorphic
groups. Then $m_{G_1}$ and $m_{G_2}$ are topologically equivalent.
\end{lem}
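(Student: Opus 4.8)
The plan is to take the conjugating homeomorphism to be a topological isomorphism itself, exploiting the fact that inversion is an intrinsic feature of a group and hence is preserved by any isomorphism. First I would unwind the conventions: by the notation set up in Section~\ref{S:questions}, both $G_1=\langle G,\oplus_1\rangle$ and $G_2=\langle G,\oplus_2\rangle$ carry the \emph{same} underlying topological space $X_G$, and each $\oplus_i$ is continuous for the topology of $X_G$; in particular each inversion $m_{G_i}\colon X_G\to X_G$ is continuous, and being an involution it is a self-homeomorphism of $X_G$. So the phrase ``$m_{G_1}$ and $m_{G_2}$ are topologically equivalent'' makes sense, and what we must produce is a self-homeomorphism $t$ of $X_G$ with $t\,m_{G_1}=m_{G_2}\,t$.

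Since $G_1$ and $G_2$ are topologically isomorphic, I would fix a topological isomorphism $\varphi\colon G_1\to G_2$; this is simultaneously a self-homeomorphism of $X_G$ and a group isomorphism from $\langle G,\oplus_1\rangle$ onto $\langle G,\oplus_2\rangle$. The key (entirely routine) step is the elementary algebraic fact that any group isomorphism commutes with inversion: $\varphi$ sends the identity $e_1$ of $G_1$ to the identity $e_2$ of $G_2$, so applying $\varphi$ to $x\oplus_1 m_{G_1}(x)=e_1$ gives $\varphi(x)\oplus_2\varphi(m_{G_1}(x))=e_2$, and uniqueness of inverses yields $\varphi\circ m_{G_1}=m_{G_2}\circ\varphi$. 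Taking $t=\varphi$ then finishes the proof.

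I do not expect any genuine obstacle here; the statement is folklore, as noted in the text, and the ``hard part'' is purely bookkeeping — being clear that the two group structures live on one and the same topological space, so that a single homeomorphism of that space can serve as the conjugacy, and checking that the two inversions are honest homeomorphisms of it. An equivalent way to organize the same argument is to observe that inversion is definable from the group operation alone, hence is carried to inversion by any topological isomorphism; I would likely present the short computation above rather than formalizing the notion of definability.
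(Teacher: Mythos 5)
Your proposal is correct and follows essentially the same route as the paper: both take the conjugating homeomorphism to be a topological isomorphism between the two group structures and use the standard fact that a group isomorphism carries inverses to inverses, i.e.\ intertwines $m_{G_1}$ with $m_{G_2}$. The paper writes the verification as the identity $m_{G_2}=h^{-1}m_{G_1}h$ checked pointwise, which is just a rearrangement of your computation.
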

\begin{proof} Let us denote by $(-_i)x$ the inverse of $x$ in $G_i$.
Let $h:G_2\to G_1$ be a group isomorphism, which is also a homeomorphism. Fix $x\in G$. We need to show that $m_{G_2}(x)=h^{-1}m_{G_1}h(x)$. 
\begin{description}
	\item[\it Left-hand side] $m_{G_2}(x)= (-_2)x$.
	\item[\it Right-hand side] Put $h(x)=y$. Then $m_{G_1}h(x) = (-_1)y$. Now put $h^{-1}((-_1)y)=z$. We have $h(x) = y$, $h(z)=(-_1)y$, and $h$ is an isomorphism. Therefore, $z=(-_2)x$.
\end{description}
The proof is complete.
\end{proof}

\par\bigskip\noindent
\begin{cor}\label{cor:equiv1}
Let $G$ be a topological group and $f:G\to G$  a homeomorphism. Then $f$ is topologically equivalent to the inversion of $G$, if there exists a group operation $\oplus$ such that $G'= \langle G, \oplus\rangle$ is topologically isomorphic to $G$ and $f$ is the inversion of $G'$.
\end{cor}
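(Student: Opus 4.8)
The plan is to obtain the corollary as an immediate consequence of Lemma~\ref{lem:twogroups}, so that there is essentially nothing to prove beyond a careful choice of which group plays which role. Assume the hypothesis: let $\oplus$ be a group operation on the underlying space of $G$ such that $G'=\langle G,\oplus\rangle$ is topologically isomorphic to $G$ and $f=m_{G'}$.

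First I would apply Lemma~\ref{lem:twogroups} to the pair $G_1=G$, carrying its original operation, and $G_2=G'$. Since $G_1$ and $G_2$ are topologically isomorphic, the lemma produces a homeomorphism $t$ of the common underlying space with $t\,m_{G_2}=m_{G_1}\,t$; indeed, the proof of the lemma exhibits $t$ explicitly as the given topological isomorphism $h\colon G_2\to G_1$, for which $m_{G_2}(x)=h^{-1}m_{G_1}h(x)$ and hence $h\,m_{G_2}=m_{G_1}\,h$.

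Second, I would substitute $m_{G_2}=m_{G'}=f$ and note that $m_{G_1}=m_G$ is, by definition, the inversion of $G$. The identity $t\,m_{G_2}=m_{G_1}\,t$ then reads $tf=m_G\,t$, which is precisely the assertion that $f$ is topologically equivalent to the inversion of $G$, with conjugating homeomorphism $t$. This finishes the argument.

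The only point worth remarking — and it is the sole ``obstacle,'' a purely formal one — is keeping track of the orientation of the conjugating homeomorphism: had Lemma~\ref{lem:twogroups} been recorded with the conjugacy written in the other direction, one would replace $t$ by $t^{-1}$, using that $t\,g=g'\,t$ forces $t^{-1}g'=g\,t^{-1}$ for homeomorphisms $g,g'$. Either way the conclusion is unchanged, and an entirely parallel statement and proof will dispatch the ``shift'' case once the analogue of Lemma~\ref{lem:twogroups} for shifts is established.
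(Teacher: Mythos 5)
Your argument is correct and is exactly how the paper derives the corollary: it is an immediate application of Lemma~\ref{lem:twogroups} with $G_1=G$ and $G_2=G'$, identifying $f=m_{G'}$ and noting that the topological isomorphism itself serves as the conjugating homeomorphism (the paper in fact states the corollary without further proof). Your remark about the orientation of the conjugacy is a harmless formality and does not affect the conclusion.
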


\par\bigskip\noindent
We now need to reverse the statement of Corollary \ref{cor:equiv1}. For this let us state the following  facts.
\par\bigskip\noindent
{\bf  Facts.} {\it
Let $G$ be a group, $X$ a set, and $f:X\to G$ a bijection. Then the following hold.
\begin{enumerate}
	\item $\langle X, \oplus_f\rangle$ is group, where $x \oplus_f y = z$ if and only of $f(x) + f(y) = f(z)$.
	\item $f:\langle X, \oplus_f\rangle \to G$ is an isomorphism.
\end{enumerate}

}
\par\bigskip\noindent
\begin{lem}\label{lem:equiv2}
Let $G$ be a topological group and let $f:G\to G$ be a homeomorphism topologically equivalent to $m_G$. Then there exists a binary operation $\oplus$  on $G$ such that $G'=\langle G, \oplus\rangle$ is a topological group topologically isomorphic to $G$ and $f=m_{G'}$.
\end{lem}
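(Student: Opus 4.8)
The plan is to build the new operation $\oplus$ by transporting the group structure of $G$ through the conjugating homeomorphism, then verify that $f$ becomes the inversion and that the resulting group is topologically isomorphic to $G$. By hypothesis there is a homeomorphism $t:G\to G$ with $tf=m_G t$, equivalently $f=t^{-1}m_G t$. The natural move is to regard $t$ as a bijection from the underlying set of $G$ onto the group $G$ and apply the \textbf{Facts} above with $X=G$ and the bijection $t$: this produces a group $G'=\langle G,\oplus_t\rangle$, where $x\oplus_t y=z$ iff $t(x)+t(y)=t(z)$, and the Facts guarantee that $t:G'\to G$ is a group isomorphism.

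Next I would check the three things that remain. First, $G'$ is a \emph{topological} group: since $t$ is a homeomorphism, the operation $\oplus_t$ is, up to conjugation by $t$, just $+$, so continuity of $\oplus_t$ and of $m_{G'}$ follows from continuity of $+$ and $m_G$ together with continuity of $t$ and $t^{-1}$ — this is the routine "transport of structure" argument. Second, $t$ being simultaneously a group isomorphism and a homeomorphism, $G'$ is topologically isomorphic to $G$. Third, and this is the heart of the matter, $f=m_{G'}$: because $t$ is an isomorphism $G'\to G$, Lemma~\ref{lem:twogroups} (or a direct one-line computation with the Facts) gives $m_{G'}=t^{-1}m_G t$, and the conjugacy hypothesis says exactly that $t^{-1}m_G t=f$. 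Hence $f=m_{G'}$, and we set $\oplus=\oplus_t$.

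I do not expect a serious obstacle here; the statement is of folklore type, as the authors note. The only point requiring a little care is making sure the direction of the conjugating homeomorphism is chosen so that the identity $tf=m_G t$ matches the direction in which the \textbf{Facts} deliver the isomorphism ($t:G'\to G$ rather than $G\to G'$); picking $t$ the other way would produce $f=t\,m_{G'}\,t^{-1}$ instead, which still works after relabeling but is notationally messier. Everything else — continuity of $\oplus_t$, associativity and inverses for $G'$ — is either supplied by the \textbf{Facts} or is an immediate consequence of $t$ being a homeomorphism, so the proof should be short.
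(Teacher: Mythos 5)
Your proposal is correct and follows essentially the same route as the paper: fix $h$ with $f=h^{-1}m_G h$, transport the group structure via $\oplus_h$ from the Facts, note the topological isomorphism given by $h$, and compute $m_{G'}=h^{-1}m_G h=f$. Your remark about choosing the direction of the conjugating map so that the Facts deliver the isomorphism $G'\to G$ matches the paper's setup exactly.
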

\begin{proof}
Fix a homeomorphism $h:G\to G$ such that $f=h^{-1}m_Gh$. Put $\oplus = \oplus_h$, where $\oplus_h$ is as in Fact (1).
Since $h$ is a homeomorphism, by Fact (2), the groups $G$ and $G'$ are topologically isomorphic by virtue of $h$. It remains to show that $f= m_{G'}$.  Fix $x$ and put $y=h(x)$. Then $f(x) = h^{-1}m_Gh(x)=h^{-1}m_G(y)=h^{-1}(-y)$. Since $h$ is an isomorphism and $y=h(x)$, we have $h^{-1}(-y)$ is the inverse of $x$ with respect to $\oplus$, which is $m_{G'}(x)$.
Lemma is proved.
\end{proof}

\par\bigskip\noindent
We now summarize Corollary \ref{cor:equiv1} and Lemma \ref{lem:equiv2} as follows.

\par\bigskip\noindent
\begin{thm}\label{thm:equivdef}
A homeomorphism  $f$ on a group $G$ is topologically equivalent to the inversion of $G$ if and only if there exists a group operation $\oplus$ on $G$ such that $G'=\langle G, \oplus\rangle$ is topologically isomorphic to $G$ and $f$ is the inversion of $G'$.
\end{thm}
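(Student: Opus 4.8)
The plan is simply to read off the theorem as the conjunction of the two halves already established in this section. The biconditional splits into two implications, and each direction has been isolated as a separate statement above: the implication ``if there is such a $\oplus$ then $f$ is topologically equivalent to $m_G$'' is precisely Corollary \ref{cor:equiv1}, and the converse implication ``if $f$ is topologically equivalent to $m_G$ then such a $\oplus$ exists'' is precisely Lemma \ref{lem:equiv2}. So the first step is to invoke Corollary \ref{cor:equiv1} for one direction and Lemma \ref{lem:equiv2} for the other, and then observe that together they give the stated equivalence.

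Since both ingredients are already proved, there is essentially no remaining work; the only thing to double-check is that the two statements are genuinely about the same notion of ``topologically equivalent'' (they are: in both cases it means the existence of a homeomorphism $h$ on $G$ with $f = h^{-1} m_G h$) and the same notion of ``topologically isomorphic'' (a group isomorphism that is simultaneously a homeomorphism). Once that bookkeeping is confirmed, one writes: this is immediate from Corollary \ref{cor:equiv1} and Lemma \ref{lem:equiv2}.

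If I were to expand the argument to be self-contained rather than merely citing, I would reproduce the two short constructions: for the ``if'' direction, given $\oplus$ with $G' = \langle G, \oplus\rangle$ topologically isomorphic to $G$ and $f = m_{G'}$, apply Lemma \ref{lem:twogroups} to the pair $G$, $G'$ to conclude $m_G$ and $m_{G'} = f$ are topologically equivalent; for the ``only if'' direction, given a homeomorphism $h$ with $f = h^{-1} m_G h$, transport the group operation along $h$ using Fact (1) to define $\oplus = \oplus_h$, use Fact (2) to see $h$ is a topological isomorphism $G' \to G$, and check directly that $m_{G'} = f$.

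I do not anticipate any genuine obstacle here: the statement is a packaging result of folklore character, and the entire content lies in Lemma \ref{lem:twogroups} and the elementary Facts about transporting a group structure along a bijection, both of which are already in hand. The only mild care needed is to keep the several group structures on the common underlying space $G$ notationally straight, which the convention $m_G$ for the inversion of $G$ is designed to handle.
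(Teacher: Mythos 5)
Your proposal is correct and matches the paper exactly: the theorem is stated there as a summary of Corollary \ref{cor:equiv1} (the ``if'' direction) and Lemma \ref{lem:equiv2} (the ``only if'' direction), which is precisely your decomposition.
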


\par\bigskip\noindent
\par\bigskip\noindent
We will next show that a version of Theorem \ref{thm:equivdef} for shifts holds too.

\par\bigskip\noindent
\begin{lem}\label{lem:nonneutralshiftsareconjugate}
Let $G_1=\langle G, \oplus_1\rangle$ and $G_2=\langle G, \oplus_2\rangle$ be topologically isomorphic
groups. Let $f_1$ be a non-neutral shift in $G_1$. Then $f_1$ is topologically equivalent to some  non-neutral shift $f_2$ in $G_2$.
\end{lem}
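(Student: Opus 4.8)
The plan is to use the topological isomorphism itself as the conjugating homeomorphism, exactly in the spirit of Lemmas \ref{lem:twogroups} and \ref{lem:equiv2}. Let $h:G_1\to G_2$ be a map that is simultaneously a group isomorphism and a homeomorphism, and write the given shift as $f_1(x)=x\oplus_1 a$ for some $a\in G$, where $a$ is not the $\oplus_1$-neutral element of $G$ (this is precisely what ``non-neutral'' means). I would then set $f_2:=h\circ f_1\circ h^{-1}$ and verify two things: that $f_2$ is a shift in $G_2$, and that it is a non-neutral one.

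For the first point, fix $y\in G$ and compute $h f_1 h^{-1}(y)=h\bigl(h^{-1}(y)\oplus_1 a\bigr)$. Since $h$ is a homomorphism from $G_1$ to $G_2$, the right-hand side equals $h(h^{-1}(y))\oplus_2 h(a)=y\oplus_2 h(a)$. Hence $f_2$ is exactly the $\oplus_2$-shift by the element $b:=h(a)$. Moreover, from $f_2=h f_1 h^{-1}$ we get $h f_1=f_2 h$, so $h$ witnesses that $f_1$ and $f_2$ are topologically equivalent in the sense of the definition given in Section \ref{S:questions} (with $t=h$).

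For the second point, I would use that $h$, being a group isomorphism, carries the $\oplus_1$-neutral element to the $\oplus_2$-neutral element and is injective; therefore $b=h(a)$ is the $\oplus_2$-neutral element if and only if $a$ is the $\oplus_1$-neutral element. Since $f_1$ is assumed to be a non-neutral shift, $a$ is not neutral in $G_1$, hence $b$ is not neutral in $G_2$, so $f_2$ is a non-neutral shift in $G_2$, as required.

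There is no genuine obstacle here; the computation is routine and essentially identical to the ones already carried out in this section. The only point demanding care is the bookkeeping of which operation, and which neutral element, is being referred to at each step, since we are juggling two group structures $\oplus_1$ and $\oplus_2$ on the same underlying set $G$.
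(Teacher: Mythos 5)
Your proposal is correct and follows essentially the same argument as the paper: transport the shift constant through the topological isomorphism and verify by a direct computation that conjugation by it carries the $\oplus_1$-shift to the $\oplus_2$-shift by the image of the constant, with non-neutrality preserved because isomorphisms send neutral element to neutral element and are injective. The only cosmetic difference is the direction of the isomorphism (the paper takes $h:G_2\to G_1$ and uses $c_2=h^{-1}(c_1)$), which is immaterial.
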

\begin{proof} Let $c_1$ be such that $f_1(x) = x\oplus_1 c_1$ for all $x$ in $G_1$.  By hypothesis, there exists a topological isomorphism $h: G_2\to G_1$. Since $c_1$ is not the neutral element of $G_1$, we conclude that $c_2=h^{-1}(c_1)$ is not the neutral element of $G_2$. Define $f_2$ by letting $f_2(x) = x\oplus_2 c_2$ for all $x$ in $G_2$.
It suffices to show that $f_2(x)=h^{-1}f_1h(x)$.
\begin{description}
	\item[\it Left-hand side] $f_2(x)= x\oplus_2c_2$.
	\item[\it Right-hand side] Put $h(x)=y$. Then $f_1h(x) = y\oplus_1 c_1$. Since $h^{-1}$ is an isomorphism, we obtain
$h^{-1}(y\oplus_1c_1) = h^{-1}(y) \oplus_2 h^{-1}(c_1) = x \oplus_2 c_2$.
\end{description}
The proof is complete.
\end{proof}

\par\bigskip\noindent
\begin{cor}\label{cor:shiftisconjugatetoshift}
Let $G$ be a topological group and let $f:G\to G$ be a homeomorphism. Then $f$ is topologically equivalent to a shift in $G$ if there exists a group operation $\oplus$ on $G$ such that $G'=\langle G, \oplus\rangle$ is topologically isomorphic to $G$ and $f$ is a shift in $G'$.
\end{cor}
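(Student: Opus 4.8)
The plan is to deduce this directly from Lemma \ref{lem:nonneutralshiftsareconjugate}, after disposing of one degenerate case that the lemma does not cover. So assume we are given a group operation $\oplus$ on $G$ such that $G'=\langle G,\oplus\rangle$ is topologically isomorphic to $G$ and $f$ is a shift in $G'$, say $f(x)=x\oplus c$ for a fixed $c\in G$.

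First I would treat the case in which $c$ is the neutral element of $G'$. Then $f(x)=x\oplus c=x$ for every $x$, so $f$ is the identity map on the underlying space $G$. But the identity is also the neutral shift of the original group $G$; hence $f$ is a shift in $G$, and it is trivially topologically equivalent to itself via the identity homeomorphism (taking $t=\mathrm{id}_G$ in the definition of topological equivalence gives $tf=f=ft$). So in this case there is nothing further to prove.

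In the remaining case $c$ is not the neutral element of $G'$, so $f$ is a non-neutral shift in $G'$. Here I would apply Lemma \ref{lem:nonneutralshiftsareconjugate} with $G_1=G'$ and $G_2=G$: since $G'$ and $G$ are topologically isomorphic and $f$ is a non-neutral shift in $G_1=G'$, the lemma supplies a non-neutral shift $f_2$ in $G_2=G$ with $f$ topologically equivalent to $f_2$. In particular $f$ is topologically equivalent to a shift in $G$, which is exactly the desired conclusion. Combining the two cases finishes the argument.

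There is essentially no obstacle here; the corollary is a bookkeeping consequence of the lemma, and the only point that requires a moment's care is that Lemma \ref{lem:nonneutralshiftsareconjugate} is stated only for non-neutral shifts, so the neutral (identity) shift must be handled separately, as above. Together with a routine converse this yields the shift analogue of Theorem \ref{thm:equivdef}.
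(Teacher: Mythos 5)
Your argument is correct and is essentially the paper's intended derivation: the corollary is stated there without a separate proof, as an immediate consequence of Lemma \ref{lem:nonneutralshiftsareconjugate} applied with $G_1=G'$ and $G_2=G$. Your separate treatment of the neutral (identity) shift is a reasonable extra precaution, since the lemma as stated covers only non-neutral shifts, and the identity is trivially a shift in $G$ equivalent to itself.
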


\par\bigskip\noindent
\begin{lem}\label{lem:conjugatetoshiftisshift}
Let $G$ be a topological group and let $f:G\to G$ be a homeomorphism topologically equivalent to a non-neutral shift in $G$. Then there exists a group operation $\oplus$  on $G$ such that $G'=\langle G, \oplus\rangle$ is a topological group topologically isomorphic to $G$ and $f$ is a non-neutral shift in $G'$.
\end{lem}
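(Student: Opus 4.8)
The plan is to mimic the proof of Lemma \ref{lem:equiv2}, replacing the inversion by a non-neutral shift throughout. By hypothesis there is a homeomorphism $h:G\to G$ and a non-neutral element $c\in G$ such that $f = h^{-1}s_ch$, where $s_c$ denotes the shift $s_c(x)=x+c$ of $G$ by $c$. Following Fact (1), I would put $\oplus=\oplus_h$, so that $x\oplus y=z$ precisely when $h(x)+h(y)=h(z)$; by Fact (2), $h:\langle G,\oplus\rangle\to G$ is a group isomorphism, and since $h$ is a homeomorphism it is a topological isomorphism. Thus $G'=\langle G,\oplus\rangle$ is a topological group topologically isomorphic to $G$.

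It then remains to identify $f$ as a non-neutral shift in $G'$. The natural candidate for the translation constant is $c'=h^{-1}(c)$. I would verify directly from the definition of $\oplus$ that $x\oplus c'=f(x)$ for every $x\in G$: indeed $x\oplus c'=z$ iff $h(x)+h(c')=h(z)$, i.e. $h(z)=h(x)+c=s_ch(x)$, i.e. $z=h^{-1}s_ch(x)=f(x)$. Hence $f$ is the shift $x\mapsto x\oplus c'$ in $G'$. Finally, since $h$ (hence $h^{-1}$) is a group isomorphism, it carries the neutral element of $G$ to the neutral element of $G'$; as $c$ is non-neutral in $G$, its image $c'=h^{-1}(c)$ is non-neutral in $G'$, so the shift is non-neutral, as required.

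There is essentially no hard step here: the only thing to watch is the bookkeeping of which group each element and operation lives in, together with the observation that being non-neutral is preserved under the isomorphism $h^{-1}$ — exactly the point already used in the proof of Lemma \ref{lem:nonneutralshiftsareconjugate}. Combining this lemma with Corollary \ref{cor:shiftisconjugatetoshift} yields the shift analogue of Theorem \ref{thm:equivdef}, and hence the promised equivalence of Question \ref{que:shift} with its reformulation.
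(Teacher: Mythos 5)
Your proposal is correct and follows essentially the same route as the paper: fix $h$ with $f=h^{-1}s_ch$, take $\oplus=\oplus_h$, use Facts (1)--(2) to get a topological isomorphism, and check that $f$ is the shift by $h^{-1}(c)$. Your explicit remark that $h^{-1}(c)$ is non-neutral because isomorphisms preserve neutral elements is a small point the paper leaves implicit, but otherwise the arguments coincide.
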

\begin{proof}
Fix a homeomorphism $h:G\to G$ and a non-neutral shift $g$ determined by a non-neutral constant $c$ such that $f=h^{-1}gh$. Put $\oplus = \oplus_h$, where $\oplus_h$ is as in Fact (1).
Since $h$ is a homeomorphism, by Fact (2), the groups $G$ and $G'=\langle G, \oplus, \mathcal T_G\rangle$ are topologically isomorphic by virtue of $h: G'\to G$. It remains to show that $f$ is the shift determined by $d=h^{-1}(c)$.  Fix $x$ and put $y=h(x)$. Then 
$f(x) = h^{-1}gh(x)=h^{-1}g(y)=h^{-1}(y+c)$. Since $h$ is an isomorphism and $y=h(x)$, we have $h^{-1}(y+c)$ is equal to $h^{-1}(y)\oplus h^{-1}(c)=x\oplus d$.
Lemma is proved.
\end{proof}

\par\bigskip\noindent
We now summarize Corollary \ref{cor:shiftisconjugatetoshift} and Lemma \ref{lem:conjugatetoshiftisshift} as follows.

\par\bigskip\noindent
\begin{thm}\label{thm:equiv4shifts}
A homeomorphism  $f$ on a group $G$ is topologically equivalent to a non-neutral shift if and only if there exists a group  operation $\oplus$ on $G$ such that $G'=\langle G, \oplus\rangle$ is topologically isomorphic to $G$ and $f$ is a non-neutral shift in $G'$.
\end{thm}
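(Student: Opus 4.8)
The plan is to assemble the theorem from the two ingredients already proved, exactly as the surrounding text promises, so the argument is essentially a bookkeeping exercise. For the ``only if'' implication, I would assume $f$ is topologically equivalent to a non-neutral shift in $G$; then Lemma~\ref{lem:conjugatetoshiftisshift} applies word for word and produces a binary operation $\oplus$ on $G$ for which $G'=\langle G,\oplus\rangle$ is a topological group topologically isomorphic to $G$ and $f$ is a non-neutral shift in $G'$. For the ``if'' implication, I would assume such an $\oplus$ exists, so that $f$ is a non-neutral shift in the isomorphic copy $G'$; applying Lemma~\ref{lem:nonneutralshiftsareconjugate} with $G_1=G'$, $G_2=G$, and $f_1=f$ then yields that $f$ is topologically equivalent to some non-neutral shift $f_2$ in $G$. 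Combining the two directions gives the stated equivalence.

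The one point that deserves a moment's care is the adjective ``non-neutral''. On the ``if'' side I would invoke Lemma~\ref{lem:nonneutralshiftsareconjugate} directly rather than merely Corollary~\ref{cor:shiftisconjugatetoshift}, because that lemma is precisely the statement that tracks non-neutrality across a topological isomorphism; it guarantees that the transported shift $f_2$ is again non-neutral, so $f$ cannot accidentally turn out to be conjugate only to the identity. On the ``only if'' side no such issue arises, since Lemma~\ref{lem:conjugatetoshiftisshift} is itself phrased in terms of non-neutral shifts. Beyond this, the underlying reason non-neutrality is preserved is simply that a homeomorphism conjugate by a group isomorphism to the identity is again the identity, and conversely; but this is already subsumed by the cited lemma.

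I do not anticipate any genuine obstacle: both implications are immediate restatements of the preceding corollary and lemmas, and the theorem exists only to record the equivalence in a single place, in parallel with Theorem~\ref{thm:equivdef} for inversions. Accordingly, the write-up would be just two or three sentences, one per direction, each citing the appropriate earlier result.
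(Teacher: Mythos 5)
Your proposal is correct and matches the paper, which proves this theorem simply by summarizing Corollary~\ref{cor:shiftisconjugatetoshift} and Lemma~\ref{lem:conjugatetoshiftisshift}. Your extra care in citing Lemma~\ref{lem:nonneutralshiftsareconjugate} directly for the ``if'' direction is a sensible refinement, since the corollary as stated only asserts equivalence to ``a shift'' while the lemma explicitly tracks non-neutrality.
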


\par\bigskip
\section{Single Fixed-Point Involutions on Subgroups of $\mathbb R$}\label{section:inverse}

\par\bigskip
In this section we modify the argument in \cite{B} to show that given a single fixed-point involution $f$ on a $\sigma$-compact subgroup $G$ of $\mathbb R$, one can restructure the algebra of $G$ without changing the topology so that the resulting group is topologically isomorphic to $G$ and has $f$ as the inversion. For convenience, we copy some  useful facts from Section \ref{section:conjugate}:
\par\bigskip\noindent
{\bf Facts.} Let $G$ be a group, $X$ a set, and $f:X\to G$ a bijection. Then the following hold.
\begin{enumerate}
	\item $\langle X, \oplus_f\rangle$ is a group, where $x \oplus_f y=z$ if and only if $f(x) + f(y) = f(z)$.
	\item $f: \langle X, \oplus_f\rangle\to G$ is an isomorphism.
	\item If $G$ is a topological group, $X$ is a topological space, and $f$ is a homeomorphism, then 
$f:\langle X, \oplus_f\rangle\to G$ is a topological isomorphism.
\end{enumerate}

\par\bigskip
\begin{thm}\label{thm:main}
Let $G$ be a $\sigma$-compact subgroup of  $\mathbb R$ and let $f:G\to G$ be a continuous involution with exactly one fixed point. Then there exists a group operation $\oplus$ on $G$ such that $G'=\langle G,\oplus\rangle$ is topologically isomorphic to $G$ and $f$ is the inversion in $G'$.
\end{thm}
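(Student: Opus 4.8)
The plan is to reduce, via Theorem~\ref{thm:equivdef}, to producing a homeomorphism $h\colon G\to G$ with $h\circ f=m_{G}\circ h$ — equivalently $h(f(x))=-h(x)$ for all $x$ — which is precisely the assertion that $f$ is topologically equivalent to the inversion $m_{G}$. Two cases are trivial: if $G=\{0\}$ then $f=\mathrm{id}=m_{G}$; if $G=\mathbb R$, then $f$ is a homeomorphism (a continuous involution is its own inverse) that cannot be orientation preserving, because the only orientation--preserving involution of $\mathbb R$ is $\mathrm{id}$, which has more than one fixed point. So $f$ is strictly decreasing with a single fixed point $p$, and taking any increasing homeomorphism $t_{0}\colon[p,\infty)\to[0,\infty)$ and setting $t(x)=t_{0}(x)$ on $[p,\infty)$ and $t(x)=-t_{0}(f(x))$ on $(-\infty,p)$ gives a homeomorphism of $\mathbb R$ with $t\circ f=m_{\mathbb R}\circ t$. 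From now on assume $G\neq\{0\},\mathbb R$; then $G$ is a zero--dimensional, separable, metrizable, $\sigma$-compact subgroup of $\mathbb R$, with unique fixed point $p$.

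Next I would exploit zero--dimensionality of $G\setminus\{p\}$, on which $f$ acts freely, to pick a clopen (in $G\setminus\{p\}$) fundamental domain $A$: $A\cap f(A)=\emptyset$ and $A\cup f(A)=G\setminus\{p\}$. Set $A^{*}=A\cup\{p\}$. Since $f$ is a homeomorphism fixing $p$, one has $p\in\overline A$ iff $p\in\overline{f(A)}$, and when $G$ is non--discrete $p$ is non--isolated and so lies in both closures; in every case $A^{*}$ and $f(A^{*})=f(A)\cup\{p\}$ are closed in $G$ with $A^{*}\cap f(A^{*})=\{p\}$ and $A^{*}\cup f(A^{*})=G$. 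The crux is then the claim that there is a homeomorphism $\psi\colon A^{*}\to G^{\ge0}:=G\cap[0,\infty)$ with $\psi(p)=0$. Granting it, put $h|_{A^{*}}=\psi$ and $h(f(a))=-\psi(a)$ for $a\in A^{*}$: the two prescriptions agree at $p$, $h$ is a bijection, and $h$ and $h^{-1}$ are continuous by the pasting lemma applied to the closed covers $\{A^{*},f(A^{*})\}$ and $\{G^{\ge0},G^{\le0}\}$ (on the latter $h^{-1}$ restricts to $\psi^{-1}$ and to $f\circ\psi^{-1}\circ m_{G}$). By construction $h(f(x))=-h(x)$, so Theorem~\ref{thm:equivdef} applies and we are done.

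The hard part is the claim, and this is the step that really consumes $\sigma$-compactness. The idea is that $G$, $G^{\ge0}$ and $A^{*}$ should all be homeomorphic to one fixed normal form depending only on $|G|$. Indeed, each is zero--dimensional, separable, metrizable and $\sigma$-compact; if $G\cong\mathbb Z$ they are countably infinite discrete spaces; otherwise $G$ is a proper dense subgroup of $\mathbb R$, hence nowhere locally compact (a locally compact dense subgroup would be open, and $\mathbb R$ is its only open subgroup), and from this I would check that $G$, $G^{\ge0}$ and $A^{*}$ are all perfect and nowhere locally compact — for $A^{*}$ using that $A$ is locally homeomorphic to $G$ and that a compact neighbourhood of $p$ in $A^{*}$ would produce a locally compact open subset of $G$. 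Since a compact subset of $\mathbb R$ is countable or of cardinality $\mathfrak c$, $|G|$ is $\aleph_{0}$ or $\mathfrak c$, and a Lindel\"of argument shows that in the latter case these three spaces are nowhere countable. The topological characterizations of $\mathbb Q$ and of $\mathbb Q\times2^{\omega}$ then identify $G^{\ge0}$ and $A^{*}$ with the same model space — a countably infinite discrete space, $\mathbb Q$, or $\mathbb Q\times2^{\omega}$ — and, each of these being topologically homogeneous, the identifying homeomorphism can be chosen to send $p$ to $0$, yielding $\psi$. I expect this topological identification of the fundamental--domain piece $A^{*}$ with the half--group $G^{\ge0}$ to be the one genuinely delicate point; the case reductions, the gluing, and the final return to a group operation through Section~\ref{section:conjugate} are routine.
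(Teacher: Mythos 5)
Your proposal is correct and follows essentially the same route as the paper: decompose $G$ into a fundamental domain for $f$, its image, and the fixed point; identify the half $A^{*}$ with $G\cap[0,\infty)$ (sending the fixed point to $0$) via the Sierpi\'nski/van Mill characterizations of $\mathbb Q$ and $\mathbb Q\times 2^{\omega}$, which is exactly where the paper also spends its $\sigma$-compactness; glue the anti-symmetric homeomorphism; and transfer the group structure through the Section~\ref{section:conjugate} equivalence. The only deviations are cosmetic: you build the clopen fundamental domain abstractly from zero-dimensionality (a standard but unproved selection step, which then forces a separate ad hoc treatment of $G=\mathbb R$), whereas the paper simply takes $A=\{a\in G:\ a>f(a)\}$, and you conclude by citing Theorem~\ref{thm:equivdef} instead of verifying directly that $f$ is the inversion for $\oplus_{\tilde h}$.
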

\begin{proof}
Put $A= \{a\in G: a> f(a)\}$. Let $e$ be the fixed point of $f$. 

\par\bigskip\noindent
{\it Claim 1. $A$ is open.}
\par\bigskip\noindent
{\it Claim 2.  $f(A)\cap A=\emptyset$.}
\par\smallskip\noindent
To prove the claim recall that $f$ is an involution. Therefore, $x < f(x)$ for each $x\in f(A)$. The claim is proved.
\par\bigskip\noindent
{\it Claim 3.  $G=A\cup f(A)\cup \{e\}$.}
\par\smallskip\noindent
The statement of the claim follows from the fact that $f(x)<x$, or $f(x)>x$, or $f(x)=x$. In the first case, $x\in A$. In the second case $x\in f(A)$. Since $e$ is the only fixed point of $f$, the third case implies that $x=e$. The claim is proved.
\par\bigskip\noindent
{\it Claim 4. There exists a homeomorphism $h: A\cup \{e\}\to G\cap [0,\infty )$ such that $f(e) = 0$.}
\par\smallskip\noindent
To prove the claim we first assume that $e$ is not a limit point of $G$. Then, $G$ is a closed discrete countable space and the conclusion is obvious. Assume now that $G=\mathbb R$. Since $e$ is the only fixed point,   $A=(e,\infty)$ and the conclusion is obvious. If $G$ is neither discrete nor connected, then  $G$ is a zero-dimensional, $\sigma$-discrete, dense in itself subset of $\mathbb R$. Therefore, $G$ is homeomorphic to the set of rational numbers or to the product of the Cantor Set and the set of irrational numbers (\cite{AU} and \cite{Sie}, or \cite{vM0}). Therefore, any two non-empty subsets of $G$ are homeomorphic. Hence, there exists a required homeomorphism. The claim is proved.

\par\bigskip\noindent
Fix $h$ is in Claim 4 and define $\tilde h: G\to G$ as follows:
$$
\tilde h(x) = \left\{
        \begin{array}{ll}
             h(x) & x \in A\cup \{e\} \\
           -h(f(x))& x\in f(A)
        \end{array}
    \right.
$$

\par\bigskip\noindent
{\it Claim 5. $\tilde h$ is a homeomorphism.} 
\par\smallskip\noindent
By virtue of Claims 1-3, it suffices to show that $h(e)=-h(f(e))$. By the choice of $h$, we have $h(e)=0$. Since $f$ fixes $e$, we have $-h(f(e))=-h(e)=0$. The claim is proved.

\par\bigskip\noindent
Put $\oplus = \oplus_{\tilde h}$. Then by Fact (3), $G'=\langle G, \oplus, {\mathcal T}_G\rangle$ is topologically isomorphic to $G$ by virtue of $\tilde h$.

\par\bigskip\noindent
{\it Claim 6. $f$ is the operation of taking the  inverse in $G'$.}  Pick $x$ in $G'$. We need to show that $f(x)\oplus  x=x\oplus f(x) =e$. Since $G$ is commutative, it suffices to demonstrate only the rightmost equality. By Claim 3, $x$ is in $A$, or in $f(A)$ or is equal to $e$. Let us consider each case separately.
\begin{description}
	\item[\rm Case $x\in A$] We have $x\oplus f(x) = x \oplus_ {\tilde h}f(x) = z$, where 
$\tilde h(x) + \tilde h(f(x)) =\tilde h(z)$. Since $f(x)\in f(A)$, we have $\tilde  h(f(x))=-\tilde h((f(f(x)))$. Since $f$ is an involution,  we have $\tilde h(f(x))=-\tilde h(x)$. Therefore, $\tilde h(x) + \tilde h(f(x)) = \tilde h(x) - \tilde h(x) = 0=\tilde h(e)$. Therefore, $z=e$ and  $x\oplus f(x) = e$.
	\item[\rm Case $x\in f(A)$] We have $x\oplus f(x) = x \oplus_ {\tilde h}f(x) = z$, where 
$\tilde h(x) + \tilde h(f(x)) =\tilde h(z)$. Since $x\in f(A)$, we have $\tilde  h(x)=-h(f(x))$. Since $f(x)\in A$, we have $\tilde h(f(x))= h(f(x))$. Therefore, $\tilde h(z) = 0$, meaning $z=e$.
	\item[\rm Case $x=e$] Since $h(e) = 0$. We have $\tilde h (e) + \tilde h(e) = 0 +0 = \tilde h(e)$. Hence $e\oplus e = e$.
\end{description}
The proof is complete.
\end{proof}

\par\bigskip\noindent
\par\bigskip\noindent
{\bf Acknowledgment.} The authors would like to thank the referee for valuable remarks and corrections.

\par\bigskip

\end{document}